\theoremstyle{plain}
\newtheorem{theorem}{Theorem}[section]
\newtheorem{prop}[theorem]{Proposition}
\newtheorem{lemma}[theorem]{Lemma}
\newtheorem{coro}[theorem]{Corollary}
\theoremstyle{definition}
\newtheorem{remark}{Remark}
\newtheorem{definition}{Definition}
\newtheorem{example}{Example}
\numberwithin{equation}{section}
\newcommand{\ts}{\hspace{0.5pt}}
\DeclareMathOperator{\dens}{\mathrm{dens}}
\newcommand{\ZZ}{\mathbb{Z}}
\newcommand{\RR}{\mathbb{R}\ts}
\newcommand{\NN}{\mathbb{N}}
\newcommand{\QQ}{\mathbb{Q}}
\begin{document}

\title[Densities of $\mathscr A$-free sets of ideals]{On the logarithmic
  probability that a random
  integral ideal is $\mathscr A$-free}

\author{Christian Huck}

\address{Fakult\"{a}t f\"{u}r Mathematik, Universit\"{a}t
  Bielefeld,\newline \hspace*{\parindent}Postfach 100131, 33501
  Bielefeld, Germany} \email{huck@math.uni-bielefeld.de}

%\date{\today}

\begin{abstract}
  This extends a theorem of Davenport and Erd\"os~\cite{ED} on
  sequences of rational integers to sequences of integral ideals in
  arbitrary number fields $K$. More precisely, we introduce a
  logarithmic density for sets of integral ideals in $K$ and provide a
  formula for the logarithmic density of the set of so-called
  $\mathscr A$-free ideals, i.e.\ integral ideals that are not
  multiples of any ideal from a fixed set $\mathscr A$.
\end{abstract}

\maketitle
\thispagestyle{empty}

\section{Introduction}
Recently, the dynamical and spectral properties of so-called
$\mathscr A$-free systems as given by the orbit closure of the
square-free integers, visible lattice points and various
number-theoretic generalisations have received increased attention;
see~\cite{BH,BKKL,CV,ELD} and references therein. One reason is the
connection of one-dimensional examples such as the square-free
integers with Sarnak's conjecture ~\cite{Sarnak} on the `randomness'
of the M\"obius function, another the explicit computability of
correlation functions as well as eigenfunctions for these systems
together with intrinsic ergodicity properties. Here, we provide a very
first step towards the study of a rather general notion of freeness
for sets of integral ideals in an algebraic number field $K$.

A well known result by Benkoski~\cite{Benkoski} states that the
probability that a randomly chosen $m$-tuple of integers is relatively
$l$-free (the integers are not divisible by a common nontrivial $l$th
power) is $1/\zeta(lm)$, where $\zeta$ is the Riemann zeta
function. In a recent paper Sittinger~\cite{Sittinger} reproved that
formula and gave an extension to arbitrary rings of algebraic integers
in number fields $K$. Due to a lack of unique prime factorisation of
integers in this general situation, one certainly passes to counting
integral ideals as a whole and, with a natural notion of asymptotic
density, the outcome is $1/\zeta_K(lm)$, where
\[
\zeta_K(s)=\sum_{0\neq \mathfrak a\subset \mathcal O_K}\frac{1}{N(\mathfrak a)^s}
\] 
is the Dedekind zeta function of $K$. This immediately leads to the
question if the result allows for a further generalisation to more
general notions of freeness, where one forbids common divisors from an
arbitrary set $\mathscr A$ of non-zero integral ideals instead of
considering merely the set consisting of all prime-powers of the form
$\mathfrak p^l$ with $\mathfrak p\subset \mathcal O_K$ prime. In the
special case $K=\QQ$ and $m=1$, this was successfully done in a paper
by Davenport and Erd\"os~\cite{ED} from 1951. The goal of this short
note is to provide a full generalisation of their result to arbitrary
rings of algebraic integers. It turns out that, building on old and
new results from analytic number theory, one can easily adjust their
argument to the more general situation. In this generality, the case
$m\ge 2$ remains open.

\section{Preliminaries}
Let $K$ be a fixed algebraic number field of degree $d=[K:\QQ]\in\NN$. Let
$\mathcal O_K$ denote the ring of integers of $K$ and recall that
$\mathcal O_K$ is a Dedekind domain~\cite{Neukirch}. Hence we have unique
factorisation of non-zero ideals into prime ideals at our
disposal, i.e.\ any non-zero integral ideal $\mathfrak a\subset
\mathcal O_K$ has a (up to
rearrangement) unique
representation of the form
\[
\mathfrak a=\mathfrak p_1\cdot \ldots\cdot \mathfrak p_l\,,
\]
where the $\mathfrak p_i$ are prime ideals. Recall that the (absolute)
norm $N(\mathfrak a)=[\mathcal O_K:\mathfrak a]$ of a non-zero
integral ideal $\mathfrak a\subset \mathcal O_K$ is always
finite. Moreover, the norm is completely multiplicative, i.e.\ one
always has 
$N(\mathfrak a\mathfrak b)=N(\mathfrak a)N(\mathfrak b)$. A proof of
the following fundamental result can be found in~\cite{Marcus}.

\begin{prop}\label{idealcount}
Let $H(x)$ be the number of non-zero integral ideals with norm less than or equal to $x$. Then 
\[
H(x)=cx+O(x^{1-\frac{1}{d}})
\]
for some positive constant $c$.
\end{prop}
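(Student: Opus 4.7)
The plan is to reduce the counting of ideals to a lattice point count in Euclidean space via the Minkowski embedding, after splitting by ideal classes. First, since the ideal class group of $\mathcal O_K$ is finite, I would write $H(x)=\sum_{C}H_C(x)$, where the sum is over the ideal classes $C$ and $H_C(x)$ counts non-zero integral ideals in $C$ with norm at most $x$. It then suffices to prove the asymptotic $H_C(x)=c_C\ts x+O(x^{1-1/d})$ for each class $C$ with a positive constant $c_C$. Fixing an integral ideal $\mathfrak b$ in the inverse class $C^{-1}$, the map $\mathfrak a\mapsto \mathfrak a\mathfrak b$ gives a bijection between integral ideals $\mathfrak a\in C$ and non-zero principal ideals $(\alpha)\subseteq \mathfrak b$, which in turn correspond to associate classes of non-zero $\alpha\in\mathfrak b$ modulo $\mathcal O_K^{\times}$. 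Since $N(\mathfrak a)=|N_{K/\QQ}(\alpha)|/N(\mathfrak b)$, the problem becomes to count associate classes of $\alpha\in\mathfrak b\setminus\{0\}$ with $|N_{K/\QQ}(\alpha)|\le x\ts N(\mathfrak b)$.

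Next, I would embed $\mathfrak b$ via the Minkowski embedding into the $d$-dimensional real vector space $V=\RR^{r_1}\times\CC^{r_2}\cong\RR^d$ as a full-rank lattice $\Lambda$. On $V$ the absolute norm extends to a degree-$d$ homogeneous form $|N(\cdot)|$, and Dirichlet's unit theorem together with a standard unfolding of the logarithmic embedding of $\mathcal O_K^{\times}$ furnishes a fundamental domain $F\subset V$ for the multiplicative action of $\mathcal O_K^{\times}$ on $\{v\in V:N(v)\neq 0\}$ such that the region $F_t:=F\cap\{v\in V:|N(v)|\le t\}$ is bounded with piecewise Lipschitz boundary and satisfies the scaling relation $F_t=t^{1/d}F_1$. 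Hence the count above equals $\#(\Lambda\cap F_{xN(\mathfrak b)})$.

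Finally, I would invoke a classical lattice-point counting lemma (essentially due to Lipschitz): for a bounded region $B\subset\RR^d$ with piecewise Lipschitz boundary and a full-rank lattice $\Lambda$ of covolume $V(\Lambda)$, one has
\[
\#(\Lambda\cap sB)=\frac{\mathrm{vol}(B)}{V(\Lambda)}\ts s^d+O(s^{d-1})
\]
as $s\to\infty$. Applying this with $B=F_1$ and $s=(xN(\mathfrak b))^{1/d}$ gives $H_C(x)=c_C\ts x+O(x^{1-1/d})$ with $c_C>0$, and summing over the finitely many ideal classes $C$ yields the claim with $c=\sum_C c_C>0$. The main obstacle is precisely the construction of the fundamental domain $F$ with the required boundary regularity: the logarithmic embedding of the unit group must be matched with the Minkowski embedding so that the truncated level sets $F_t$ are genuinely Lipschitz in the sense needed by the counting lemma. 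Once that geometric input is in place, the rest is bookkeeping, and this is why the paper simply cites Marcus~\cite{Marcus} for the full argument.
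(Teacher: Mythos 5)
Your proposal is correct and follows exactly the classical argument of Marcus that the paper cites in lieu of giving a proof: decomposition by ideal class, reduction via $\mathfrak a\mapsto\mathfrak a\mathfrak b$ to counting associate classes of elements of a fixed ideal $\mathfrak b$, and a Lipschitz lattice-point count in a homogeneously expanding fundamental domain for the unit action. The only detail you gloss over is the finite group of roots of unity, which is harmlessly absorbed into the constant $c_C$ by dividing the lattice-point count by its order.
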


\begin{coro}\label{asympden}
As $x\to\infty$, one has
\[
\sum_{N(\mathfrak a)\le x}\frac{1}{N(\mathfrak
  a)}\,\,\sim \,\,c\log x\,,
\]
where $c$ is the constant from Proposition~\ref{idealcount}.
\end{coro}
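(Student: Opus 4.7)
The plan is to reduce the sum to the counting function $H$ via partial (Abel) summation and then plug in the asymptotic from Proposition~\ref{idealcount}. Set $a_n$ to be the number of non-zero integral ideals $\mathfrak a\subset\mathcal O_K$ with $N(\mathfrak a)=n$, so that $H(x)=\sum_{n\le x}a_n$ and
\[
S(x)\,:=\sum_{N(\mathfrak a)\le x}\frac{1}{N(\mathfrak a)}\,=\,\sum_{1\le n\le x}\frac{a_n}{n}\,.
\]

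Abel summation (with the function $t\mapsto 1/t$, which is $C^1$ on $[1,\infty)$) gives
\[
S(x)\,=\,\frac{H(x)}{x}\,+\,\int_{1}^{x}\frac{H(t)}{t^{2}}\,\dd t\,.
\]
Now I would substitute $H(t)=ct+O(t^{\,1-1/d})$ from Proposition~\ref{idealcount}. The boundary term $H(x)/x=c+O(x^{-1/d})$ is bounded. Splitting the integral,
\[
\int_{1}^{x}\frac{H(t)}{t^{2}}\,\dd t\,=\,c\int_{1}^{x}\frac{\dd t}{t}\,+\,\int_{1}^{x}O\!\left(t^{-1-\tfrac{1}{d}}\right)\dd t\,=\,c\log x\,+\,O(1)\,,
\]
since $d\ge 1$ ensures that $\int_{1}^{\infty}t^{-1-1/d}\,\dd t$ converges. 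Combining, $S(x)=c\log x+O(1)$, which yields $S(x)\sim c\log x$ as $x\to\infty$.

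There is no serious obstacle here; the only point requiring a moment of care is that the error exponent $-1-1/d$ in the integrand is strictly less than $-1$, so the error integral converges and contributes only $O(1)$, allowing the main term $c\log x$ to dominate. If $d$ were $1$ the error from $H$ would be $O(1)$ and one would integrate $O(t^{-2})$ instead, with the same conclusion. Thus the argument is uniform in the degree of the number field and the corollary follows immediately from Proposition~\ref{idealcount}.
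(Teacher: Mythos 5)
Your proof is correct and follows essentially the same route as the paper: partial summation to express the sum in terms of $H$, followed by substituting the asymptotic from Proposition~\ref{idealcount}. The only cosmetic difference is that you use the integral (Abel) form of partial summation while the paper uses the discrete version with the sum $\sum_k H(k)/(k(k+1))$; both yield $c\log x + O(1)$ in the same way.
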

\begin{proof}
For $k\in\NN$,  let $h(k)$ denote the number of
non-zero  integral ideals with norm equal to $k$.  Summation by parts yields
\begin{eqnarray*}
\sum_{N(\mathfrak a)\le x}\frac{1}{N(\mathfrak
  a)}&=&\sum_{k=1}^{\lfloor x\rfloor}\frac{h(k)}{k}\\
&=& \frac{H(\lfloor
                                               x\rfloor)}{\lfloor
                                               x\rfloor}+\sum_{k=1}^{\lfloor
                                               x\rfloor-1}\frac{H(k)}{k(k+1)}\\
&=& c+O(x^{-\frac{1}{d}})+c\sum_{k=1}^{\lfloor
    x\rfloor-1}\frac{1}{k+1}+O\Big(\sum_{k=1}^{\lfloor
    x\rfloor-1}\frac{k^{-\frac{1}{d}}}{k+1}\Big)\\
&=& c\sum_{k=1}^{\lfloor
    x\rfloor-1}\frac{1}{k+1} +O(1)\\
&\sim& c\log x\,,
\end{eqnarray*}
since $\sum_{k=1}^{\lfloor
    x\rfloor}\frac{1}{k}\sim \log x$ as $x\to\infty$.
\end{proof}

The following generalisation of Mertens' third  theorem to partial Euler products of the Dedekind
zeta function $\zeta_K(s)$ of $K$ at $s=1$ was shown by Rosen. It will
turn out to be crucial for our main result.

\begin{theorem}\cite{Rosen}\label{Mertens}
There is a positive constant $C$ such that
\[
\prod_{N(\mathfrak p)\le x}\Big(1-\frac{1}{N(\mathfrak
  p)}\Big)^{-1}=C\log x +O(1)\,,
\]
where $\mathfrak p$ ranges over the prime ideals of $\mathcal O_K$. In particular,
$\prod_{N(\mathfrak
  p)\le x}(1-\frac{1}{N(\mathfrak p)})^{-1}\sim C\log
x$ as $x\to\infty$. 
\end{theorem}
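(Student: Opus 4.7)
The plan is to mimic the classical proof of Mertens' third theorem, systematically replacing rational integers by integral ideals and the von Mangoldt function by its ideal-theoretic analogue. Taking logarithms of the product and Taylor-expanding $-\log(1-t)$ around $t=0$ gives
\[
\log\!\prod_{N(\mathfrak p)\le x}\!\Bigl(1-\tfrac{1}{N(\mathfrak p)}\Bigr)^{\!-1} \;=\; \sum_{N(\mathfrak p)\le x}\frac{1}{N(\mathfrak p)} \;+\; \sum_{N(\mathfrak p)\le x}\sum_{k\ge 2}\frac{1}{k\,N(\mathfrak p)^{k}}.
\]
At most $d$ prime ideals of $\mathcal O_K$ lie above each rational prime $p$, each with $N(\mathfrak p)\ge p$, so $\sum_{\mathfrak p} N(\mathfrak p)^{-2} \le d\sum_p p^{-2} < \infty$ and the double sum on the right is bounded. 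Everything is thus reduced to establishing the ideal-theoretic Mertens' second theorem
\[
\sum_{N(\mathfrak p)\le x}\frac{1}{N(\mathfrak p)} \;=\; \log\log x + B_K + O\bigl(1/\log x\bigr),
\]
from which the statement follows by exponentiation with $C=e^{B_K}$.

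To establish this, I would first prove the ideal-theoretic Mertens' first theorem
\[
\sum_{N(\mathfrak a)\le x}\frac{\Lambda_K(\mathfrak a)}{N(\mathfrak a)} \;=\; \log x + O(1),
\]
where $\Lambda_K(\mathfrak p^k)=\log N(\mathfrak p)$ for prime-power ideals and is $0$ otherwise. The starting point is the divisor identity $\sum_{\mathfrak a\mid\mathfrak b}\Lambda_K(\mathfrak a) = \log N(\mathfrak b)$; exchanging the order of summation converts it into
\[
\sum_{N(\mathfrak b)\le x}\log N(\mathfrak b) \;=\; \sum_{N(\mathfrak a)\le x}\Lambda_K(\mathfrak a)\, H\bigl(x/N(\mathfrak a)\bigr).
\]
Partial summation on the left-hand side, combined with substituting $H(y)=cy+O(y^{1-1/d})$ from Proposition~\ref{idealcount} on the right, separates a main term of size $cx\log x$ from errors of size $O(x)$; dividing through by $cx$ delivers the $\log x + O(1)$ estimate. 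The passage from prime powers back to primes costs only $O(1)$ because $\sum_{k\ge 2,\mathfrak p} \log N(\mathfrak p)/N(\mathfrak p)^{k}$ converges, and a standard Abel summation against the weights $\log N(\mathfrak p)/N(\mathfrak p)$ then delivers the Mertens' second analogue.

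The main obstacle is the bookkeeping in the first-theorem step: the weaker statement $H(y)\sim cy$ alone only yields $\sum\Lambda_K(\mathfrak a)/N(\mathfrak a) \sim \log x$, so one has to exploit the full power-saving error $O(y^{1-1/d})$ in Proposition~\ref{idealcount} and verify that, when summed against $\Lambda_K$ over $N(\mathfrak a)\le x$, it contributes at most $O(x)$ rather than acquiring a spurious $\log$-factor. This in turn rests on a Chebyshev-type bound $\psi_K(x) := \sum_{N(\mathfrak a)\le x}\Lambda_K(\mathfrak a) = O(x)$, which can be bootstrapped from the same divisor identity using Proposition~\ref{idealcount}. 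Once this accounting is settled, the rest of the proof is a routine transcription of the classical Mertens chain.
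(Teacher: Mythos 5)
The paper does not actually prove this statement: it is quoted from Rosen \cite{Rosen}, so there is no internal proof to compare against. That said, your outline is in substance Rosen's own argument (and the standard one): derive the ideal-theoretic Mertens' first theorem from the divisor identity $\sum_{\mathfrak a\mid\mathfrak b}\Lambda_K(\mathfrak a)=\log N(\mathfrak b)$ together with the ideal-counting asymptotics of Proposition~\ref{idealcount}, pass to the second theorem by Abel summation, and exponentiate. Your reduction of the Euler product to $\sum_{N(\mathfrak p)\le x}1/N(\mathfrak p)$ via the convergent double sum over $k\ge 2$ is also correct, and since the tail of that double sum beyond $x$ is $O(1/x)$, it does not spoil the $O(1/\log x)$ error needed to recover $C\log x+O(1)$ rather than mere asymptotic equivalence after exponentiating.

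The one step you pass over too quickly is the Chebyshev bound $\psi_K(x)=O(x)$. You assert it ``can be bootstrapped from the same divisor identity using Proposition~\ref{idealcount}'', but the classical bootstrap rests on the exact combinatorial fact $\lfloor y\rfloor-2\lfloor y/2\rfloor\in\{0,1\}$, whereas the ideal analogue $H(y)-2H(y/2)$ is merely $O(y^{1-1/d})$ with no sign or size control; transcribing the argument produces an error term of the shape $x^{1-1/d}\sum_{N(\mathfrak a)\le x}\Lambda_K(\mathfrak a)\,N(\mathfrak a)^{-(1-1/d)}$, which is exactly the quantity you needed the Chebyshev bound to control in the first place --- so, as stated, the bootstrap is circular. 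The repair is elementary and bypasses the divisor identity: a prime ideal $\mathfrak p$ above a rational prime $p$ has $N(\mathfrak p)=p^{f}$, so its prime powers of norm at most $x$ contribute $f\lfloor\log x/(f\log p)\rfloor\log p\le\lfloor\log x/\log p\rfloor\log p$ to $\psi_K(x)$, and since at most $d$ prime ideals lie above each $p$ one gets $\psi_K(x)\le d\,\psi(x)=O(x)$ from the rational Chebyshev bound. With that fixed, the rest of your bookkeeping --- in particular that the power-saving error in Proposition~\ref{idealcount} contributes only $O(x)$ to the first-theorem identity --- goes through as you describe.
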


\begin{remark}
  In fact, Rosen shows that the constant $C$ above is given by
  $C=\alpha_Ke^{\gamma}$, where $\alpha_K$ is the residue of
  $\zeta_K(s)$ at $s=1$ and $\gamma$ is the Euler-Mascheroni constant.
\end{remark}

Let
$\mathscr A=\{\mathfrak a_1,\mathfrak a_2,\dots\}$ be a fixed set of
non-zero integral ideals $\mathfrak a_i\subset \mathcal O_K$. We are
interested in the set 
\[
\mathcal M_{\mathscr A}:=\{\mathfrak b\neq 0 \mid \exists i\,\, \mathfrak b \subset \mathfrak a_i\}
\]
of non-zero integral ideals that are 
multiples of some $\mathfrak a_i$ respectively its complement in the
set of all non-zero integral ideals
\[
\mathcal V_{\mathscr A}:=\{\mathfrak b\mid \forall i\,\, \mathfrak b\not\subset \mathfrak a_i\}
\]
of so-called \emph{$\mathscr A$-free}\/ (or \emph{$\mathscr
  A$-prime}\/) integral ideals. More precisely, we ask
 if the natural asymptotic densities of these sets exist. In
 general, one defines densities of sets of non-zero
integral ideals as follows.

\begin{definition}
Let $S$ be a set of  non-zero
integral ideals $\mathfrak b\subset \mathcal O_K$. and let $S(x)$ be
the subset of those $\mathfrak b$ with $N(\mathfrak b)\le x$. 
\begin{itemize}
\item[(1)]
The \emph{upper/lower (asymptotic) density}\/ $D(S)$/$d(S)$ of $S$ is defined as
\[
\limsup_{x\to\infty}\text{/}\liminf_{x\to\infty}\frac{S(x)}{H(x)}\,.
\]
If these numbers coincide, the common value is called the
\emph{(asymptotic) density}\/ of $S$, denoted by $\dens (S)$.
\item[(2)]
The \emph{upper/lower (asymptotic) logarithmic density}\/ $\Delta
(S)$/$\delta(S)$ of $S$ is defined as
\[
\limsup_{x\to\infty}\text{/}\liminf_{x\to\infty}\frac{\sum_{\substack{\mathfrak
      b\in S\\
    N(\mathfrak b_)\le x}}\frac{1}{N(\mathfrak
  b)}}{\sum_{\substack{0 \neq \mathfrak
      b\subset\mathcal O_K\\
    N(\mathfrak b)\le x}}\frac{1}{N(\mathfrak
  b)}}\,,
\]
where one might substitute the denominator by $c\log x$ due to
Corollary~\ref{asympden}. Again, if these numbers coincide, the common value is called the
\emph{(asymptotic) logarithmic density}\/ of $S$, denoted by $\dens_{\log}(S)$.
\end{itemize}
\end{definition}

As in the well known special case of rational integers, the above
lower und upper densities are related as follows.

\begin{lemma}[Density inequality]
For any set $S$
of non-zero
integral ideals of $K$, one has
\[
d(S)\,\le \,\delta(S)\,\le \,\Delta(S)\,\le \,D(S)\,.
\]
In particular, the existence of the density of $S$ implies the existence of
the logarithmic density of $S$.
\end{lemma}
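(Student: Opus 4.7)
The plan is to establish the outer two inequalities $d(S)\le\delta(S)$ and $\Delta(S)\le D(S)$; the middle inequality $\delta(S)\le\Delta(S)$ is just $\liminf\le\limsup$ applied to a single quotient. The final consequence, that existence of $\dens(S)$ forces existence of $\dens_{\log}(S)$, is then automatic since the chain of inequalities collapses.

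For the inequality $d(S)\le\delta(S)$, I would mimic the Abel summation already carried out in the proof of Corollary~\ref{asympden}, but with the restricted counting function $S(x)$ in place of $H(x)$. Writing $s(k)$ for the number of ideals in $S$ of norm exactly $k$, summation by parts gives
\[
\sum_{\substack{\mathfrak b\in S\\ N(\mathfrak b)\le x}}\frac{1}{N(\mathfrak b)}
\,=\,\frac{S(\lfloor x\rfloor)}{\lfloor x\rfloor}
+\sum_{k=1}^{\lfloor x\rfloor-1}\frac{S(k)}{k(k+1)}.
\]
The initial term is dominated by $H(\lfloor x\rfloor)/\lfloor x\rfloor=c+O(x^{-1/d})=O(1)$, so it disappears after division by $c\log x$. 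For the main sum, fix $\varepsilon>0$ and choose $k_0$ so large that $S(k)\ge(d(S)-\varepsilon)H(k)$ holds for every $k\ge k_0$; this is directly available from the definition of $d(S)$ as a $\liminf$. Splitting the tail off, one gets
\[
\sum_{k=1}^{\lfloor x\rfloor-1}\frac{S(k)}{k(k+1)}
\,\ge\,(d(S)-\varepsilon)\sum_{k=k_0}^{\lfloor x\rfloor-1}\frac{H(k)}{k(k+1)}+O(1),
\]
and the remaining sum on the right was already identified in the proof of Corollary~\ref{asympden} as $c\log x+O(1)$. Dividing through by $c\log x$ and passing to $\liminf_{x\to\infty}$ yields $\delta(S)\ge d(S)-\varepsilon$, so letting $\varepsilon\to0$ concludes.

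The symmetric inequality $\Delta(S)\le D(S)$ runs along exactly the same lines, using instead the bound $S(k)\le(D(S)+\varepsilon)H(k)$, which holds for every sufficiently large $k$ by the very definition of $D(S)=\limsup S(x)/H(x)$, and then passing to $\limsup$ at the end.

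I do not anticipate any serious obstacle: conceptually this is the classical density-inequality argument, with $H(x)$ replacing $x$ and the standard estimate $\sum_{n\le x}1/n\sim\log x$ replaced by Corollary~\ref{asympden}. The only delicate point is the bookkeeping of the $O(1)$ contributions — those from the initial boundary term and from the indices $k<k_0$ — but both are negligible once divided by the $c\log x$ denominator, so the argument goes through cleanly.
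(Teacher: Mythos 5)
Your proposal is correct and follows essentially the same route as the paper: summation by parts for the restricted sum, the eventual bounds $S(k)\ge(d(S)-\varepsilon)H(k)$ resp.\ $S(k)\le(D(S)+\varepsilon)H(k)$ from the definitions of $\liminf$ and $\limsup$, splitting off the finitely many small indices as an $O(1)$ term, and invoking the asymptotics $\sum_{k=N}^{n-1}H(k)/(k(k+1))\sim c\log n$ from the proof of Corollary~\ref{asympden}. The bookkeeping of the boundary term and the initial segment is exactly as you describe, so the argument goes through.
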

\begin{proof}
The assertion follows from summation by parts as follows. Let us first show that $\Delta(S)\le D(S)$. To this end, let $\varepsilon >0$ and
choose $N\in\NN$ such that $\frac{S(n)}{H(n)}\le D(S)+\varepsilon$ for
all $n\ge N$. For $k\in\NN$,  let $s(k)$ denote the number of
non-zero  integral ideals $\mathfrak a\in S$ with norm equal to $k$.
Summation by parts yields for $n\ge N$
\begin{eqnarray*}
\sum_{k=1}^{n}\frac{s(k)}{k}&=& \frac{S(
                                              n)}{n}+\sum_{k=1}^{n-1}\frac{S(k)}{k(k+1)}\\
&\le& \frac{H(
                                              n)}{n}+\sum_{k=1}^{N-1}\frac{S(k)}{k(k+1)}+\sum_{k=N}^{n-1}\frac{S(k)}{k(k+1)}\\
&\le& \frac{H(
                                              n)}{n}+\sum_{k=1}^{N-1}\frac{S(k)}{k(k+1)}+(D(S)+\varepsilon)\sum_{k=N}^{n-1}\frac{H(k)}{k(k+1)}\,.
\end{eqnarray*}
Since $\frac{H(n)}{n}\rightarrow c$ and
$\sum_{k=N}^{n-1}\frac{H(k)}{k(k+1)}\sim c\log n$ as $n\to \infty$
(see the proof of Corollary~\ref{asympden}),
one obtains $\Delta(S)\le D(S)+\varepsilon$. The assertion follows.

For the left inequality $d(S)\le\delta(S)$, let $\varepsilon >0$ and
choose $N\in\NN$ such that $\frac{S(n)}{H(n)}\ge d(S)-\varepsilon$ for
all $n\ge N$. Again, summation by parts yields for $n\ge N$
 \begin{eqnarray*}
\sum_{k=1}^{n}\frac{s(k)}{k}
&\ge&\sum_{k=N}^{n-1}\frac{S(k)}{k(k+1)}\\
&\ge& (d(S)-\varepsilon)\sum_{k=N}^{n-1}\frac{H(k)}{k(k+1)}\,,
\end{eqnarray*}
which as above implies $\delta(S)\ge d(S)-\varepsilon$ and thus the assertion.
\end{proof}

\section{The Davenport-Erd\"os theorem for number fields}

Next, we shall study the densities of the set $\mathcal
M_{\mathscr A}$. Let us start with the finite case. Note that, for a
finite set $\mathcal J$ of integral ideals, their least common
multiple is just the intersection $\bigcap\mathcal J$. 

\begin{prop}
If $\mathscr A$ is finite, then
the density of $\mathcal M_{\mathscr A}$ exists and is given by
\[
\dens (\mathcal M_{\mathscr A})=\sum_{\varnothing\neq \mathcal J\subset
    \mathscr A}(-1)^{\mid \mathcal J\mid+1} \frac{1}{N\big(\bigcap \mathcal J\big)}
\]
\end{prop}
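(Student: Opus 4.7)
The plan is a straightforward inclusion-exclusion argument, driven by two observations: (i) for any non-zero integral ideal $\mathfrak{a}$, the map $\mathfrak{c} \mapsto \mathfrak{a}\mathfrak{c}$ is a bijection between non-zero integral ideals $\mathfrak{c}$ and non-zero integral multiples of $\mathfrak{a}$, and (ii) for a finite collection $\mathcal{J}$ of ideals, the common multiples of all elements of $\mathcal{J}$ are precisely the multiples of $\bigcap \mathcal{J}$ (i.e.\ their least common multiple).

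Using (i) together with the multiplicativity of the norm, the number of non-zero multiples of a fixed ideal $\mathfrak{a}$ having norm at most $x$ equals $H(x/N(\mathfrak{a}))$. Hence, writing $M_{\mathcal{J}}(x)$ for the number of ideals of norm at most $x$ that lie in every $\mathfrak{a} \in \mathcal{J}$, observation (ii) gives
\[
M_{\mathcal{J}}(x) \,=\, H\Big(\tfrac{x}{N(\bigcap \mathcal{J})}\Big).
\]
Since $\mathscr{A}$ is finite, ordinary inclusion-exclusion for the finite union $\mathcal{M}_{\mathscr{A}} = \bigcup_{i} \{\mathfrak{b} : \mathfrak{b} \subset \mathfrak{a}_i\}$ yields, for every $x > 0$,
\[
\mathcal{M}_{\mathscr{A}}(x) \,=\, \sum_{\varnothing \neq \mathcal{J} \subset \mathscr{A}} (-1)^{|\mathcal{J}|+1}\, H\Big(\tfrac{x}{N(\bigcap \mathcal{J})}\Big).
\]

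Dividing by $H(x)$ and invoking Proposition~\ref{idealcount}, each term satisfies
\[
\frac{H(x/N(\bigcap \mathcal{J}))}{H(x)} \,=\, \frac{c\,x/N(\bigcap \mathcal{J}) + O(x^{1-1/d})}{c\,x + O(x^{1-1/d})} \,\longrightarrow\, \frac{1}{N(\bigcap \mathcal{J})}
\]
as $x \to \infty$. Summing the finitely many contributions with their signs gives the claimed formula.

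There is no serious obstacle here: finiteness of $\mathscr{A}$ makes the inclusion-exclusion sum finite, so we never need to justify interchanging a limit with an infinite sum, and Proposition~\ref{idealcount} immediately supplies the pointwise asymptotics for each summand. The only thing to be careful about is to note that the error term $O(x^{1-1/d})$ survives division by $H(x) \sim cx$ and thus vanishes in the limit, which is routine.
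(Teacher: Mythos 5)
Your proof is correct and follows essentially the same route as the paper: the bijection $\mathfrak{c}\mapsto\mathfrak{a}\mathfrak{c}$ combined with multiplicativity of the norm to count multiples via $H(x/N(\mathfrak{a}))$, the identification of common multiples with multiples of $\bigcap\mathcal{J}$, finite inclusion-exclusion, and Proposition~\ref{idealcount} to pass to the limit. No changes needed.
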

\begin{proof}
  If $\mathfrak b$ is a
  non-zero integral ideal of norm $N(\mathfrak b)$ and divisible by
  $\mathfrak a$, then there is a unique non-zero integral ideal $\mathfrak a'$
  such that $\mathfrak b=\mathfrak a\mathfrak a'$. In particular,
  $N(\mathfrak a')=N(\mathfrak b)/N(\mathfrak a)$ by the
  multiplicativity of the norm. This provides a bijection from the set
  of multiples of $\mathfrak a$ of norm $n$ to the set of non-zero integral ideals
  of norm $n/N(\mathfrak a)$. Hence, by the inclusion-exclusion
  principle, one has
\[
\frac{S(x)}{H(x)}=\sum_{\varnothing\neq \mathcal J\subset
    \mathscr A}(-1)^{\mid \mathcal J\mid+1} H\Big(\frac{x}{N(\bigcap
                     \mathcal J)}\Big)\Big / H(x)\,.
\] 
Application of Proposition~\ref{idealcount} now yields the assertion. 
\end{proof}

Now let $\mathscr A=\{\mathfrak a_1,\mathfrak a_2,\dots\}$ be (countably)
infinite. Since $\dens(\mathcal M_{\{\mathfrak a_1,\dots ,\mathfrak
  a_r\}})$ is an increasing sequence with upper bound $1$, we may
define
\[
A:=\lim_{r\to\infty}\dens(\mathcal M_{\{\mathfrak a_1,\dots ,\mathfrak
  a_r\}})\,.
\]  
It is then natural to ask if, in general, $A$ is the density of
$\mathcal M_{\mathscr A}$. Already in the special case $K=\QQ$ the
answer is negative in the sense that the natural lower and upper densities may
differ; cf.~\cite{Besicovitch}.

\begin{remark}\label{condition}
Due to $\dens(\mathcal M_{\{\mathfrak a_1,\dots ,\mathfrak
  a_r\}})\le d(\mathcal M_{\mathscr A})$ for all $r\in\NN$, one has $A\le d(\mathcal M_{\mathscr A})$.
\end{remark}

\begin{prop}\label{convcond}
If the series $\sum_{\mathfrak a \in\mathscr A}\frac{1}{N(\mathfrak a)}$
converges, then the density of $\mathcal M_{\mathscr A}$ exists and is
equal to $A$.
\end{prop}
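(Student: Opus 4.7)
The plan is to sandwich $\mathcal M_{\mathscr A}$ between its finite truncations and a controllable tail, reducing everything to the already-settled finite case plus the convergence hypothesis. By Remark~\ref{condition} we already know $A\le d(\mathcal M_{\mathscr A})$, so the entire task is to prove the matching upper bound $D(\mathcal M_{\mathscr A})\le A$; once we have this, the density exists and equals $A$.

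For the upper bound, fix $r\in\NN$ and write
\[
\mathcal M_{\mathscr A}(x)\,\le\,\mathcal M_{\{\mathfrak a_1,\dots,\mathfrak a_r\}}(x)+\sum_{i>r}\#\{\mathfrak b\neq 0 : \mathfrak b\subset \mathfrak a_i,\, N(\mathfrak b)\le x\}.
\]
The bijection from the proof of the finite case (multiples of $\mathfrak a_i$ of norm $n$ correspond to integral ideals of norm $n/N(\mathfrak a_i)$) shows that the $i$-th summand on the right equals $H\bigl(x/N(\mathfrak a_i)\bigr)$. Dividing by $H(x)$ yields
\[
\frac{\mathcal M_{\mathscr A}(x)}{H(x)}\,\le\,\frac{\mathcal M_{\{\mathfrak a_1,\dots,\mathfrak a_r\}}(x)}{H(x)}+\sum_{i>r}\frac{H\bigl(x/N(\mathfrak a_i)\bigr)}{H(x)}.
\]

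The key step is to control the tail uniformly in $x$. From Proposition~\ref{idealcount} one extracts a constant $K>0$ with $H(y)\le Ky$ for all $y\ge 0$ (using $H(y)=0$ for $y<1$ to handle small arguments), and, for $x$ sufficiently large, $H(x)\ge (c/2)x$. Therefore
\[
\frac{H\bigl(x/N(\mathfrak a_i)\bigr)}{H(x)}\,\le\,\frac{2K}{c}\cdot\frac{1}{N(\mathfrak a_i)},
\]
so that $\sum_{i>r}H\bigl(x/N(\mathfrak a_i)\bigr)/H(x)\le (2K/c)\sum_{i>r}1/N(\mathfrak a_i)$. By the convergence hypothesis, this tail tends to $0$ as $r\to\infty$, uniformly in $x$. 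Taking $\limsup_{x\to\infty}$ and using the finite-case proposition on the first term on the right gives
\[
D(\mathcal M_{\mathscr A})\,\le\,\dens\bigl(\mathcal M_{\{\mathfrak a_1,\dots,\mathfrak a_r\}}\bigr)+\frac{2K}{c}\sum_{i>r}\frac{1}{N(\mathfrak a_i)}.
\]
Letting $r\to\infty$ yields $D(\mathcal M_{\mathscr A})\le A$, completing the proof.

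The only place where some care is needed is the uniform tail estimate: one must avoid the trap of appealing to the pointwise asymptotic $H(x/N(\mathfrak a_i))/H(x)\to 1/N(\mathfrak a_i)$, which by itself would not permit interchanging limit and infinite sum. The clean linear majorant $H(y)\le Ky$ afforded by Proposition~\ref{idealcount} is exactly what makes the argument work and is the main point where the number field setting differs from the rational integer case of Davenport and Erdős.
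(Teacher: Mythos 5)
Your proof is correct and follows essentially the same route as the paper: truncate $\mathscr A$ at level $r$, bound the count of multiples of the tail ideals by $\sum_{i>r}H(x/N(\mathfrak a_i))$, and let $r\to\infty$ using the convergence hypothesis together with Remark~\ref{condition}. Your explicit uniform majorant $H(y)\le Ky$ justifying the interchange of $\limsup_{x\to\infty}$ with the infinite tail sum is a point the paper glosses over, but it is the same argument.
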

\begin{proof}
For fixed $r\in\NN$, the number of elements of $\mathcal M_{\mathscr A}$ up to norm $n$
\emph{not}\/ divisible by any of $\mathfrak a_1,\dots,\mathfrak a_r$
is at most $\sum_{i=r+1}^{\infty}H(\frac{n}{N(\mathfrak
  a_{i})})$. Hence, the corresponding upper density is at most
$\sum_{i=r+1}^{\infty}\frac{1}{N(\mathfrak a_i)}$ and this converges
to $0$ as $r\to\infty$. It follows that the upper density of $\mathcal
M_{\mathscr A}$ is
\[
\dens(\mathcal M_{\{\mathfrak a_1,\dots ,\mathfrak
  a_r\}})+O\Big(\sum_{i=r+1}^{\infty}\frac{1}{N(\mathfrak a_i)}\Big)\,,
\]
which converges to $A$ as $r\to\infty$. This yields $D(\mathcal
M_{\mathscr A})\le A$ and thus the assertion by Remark~\ref{condition}. 
\end{proof}

\begin{example}
Recall that the Dedekind zeta function $\zeta_K(s)$ converges for all
$s>1$ and has the Euler product expansion
\[
\zeta_K(s)\,=\,\sum_{\mathfrak a\neq 0}\frac{1}{N(\mathfrak a)^s}\,=\,\prod_{\mathfrak p}\Big(1-\frac{1}{N(\mathfrak p)^s}\Big)^{-1}\,.
\] 
It follows that, for $l\ge
2$ fixed and $\mathscr A=\{\mathfrak p^l\mid \mathfrak p \text{ prime}\}$, the density of $\mathcal M_{\mathscr A}$ exists and is
equal to
\[
1-\prod_{\mathfrak p}\Big(1-\frac{1}{N(\mathfrak p)^l}\Big)\,=\,1-\frac{1}{\zeta_K(l)}\,.
\]
In other words, the density of $\mathcal V_{\mathscr A}$ exists and is
equal to 
$\frac{1}{\zeta_K(l)}$, in accordance with~\cite[Thm.\ 4.1]{Sittinger}.
\end{example}

As a preparation of the proof below, we next introduce the so-called \emph{multiplicative
  density}\/ of $\mathcal M_{\mathscr A}$. Let $\{\mathfrak
p_1,\mathfrak p_2,\dots \}$ be the set of all prime ideals of $\mathcal
O_K$, with a numbering that corresponds to increasing order with
respect to the norms, i.e.\ $i\le j$ always implies $N(\mathfrak
p_i)\le N(\mathfrak
p_j)$. For $k\in\NN$ fixed, denote by
$\mathfrak n'$ the general non-zero integral ideal composed entirely
of the prime ideals $\mathfrak
p_1,\dots,\mathfrak p_k$ (a so-called \emph{$\mathfrak
p_1,\dots,\mathfrak p_k$-ideal}\/). Then, one has the convergence
\[
\sum_{\mathfrak n'}\frac{1}{N(\mathfrak
  n')}=\prod_{i=1}^{k}\Big(1-\frac{1}{N(\mathfrak p_i)}\Big)^{-1}=: \Pi_k\,.
\] 
Further, denote by $\mathfrak b'$ those ideals from $\mathcal
M_{\mathscr A}$ that are $\mathfrak
p_1,\dots,\mathfrak p_k$-ideals and let
\[
B_k:=\frac{\sum_{\mathfrak b'}\frac{1}{N(\mathfrak
    b')}}{\sum_{\mathfrak n'} \frac{1}{N(\mathfrak n')}}={\Pi_k}^{-1}\sum_{\mathfrak b'}\frac{1}{N(\mathfrak b')}\,.
\]  
If the sequence $B_k$ converges as $k\to \infty$, the limit is called
the \emph{multiplicative density}\/ of $\mathcal M_{\mathscr A}$. Let
$\mathscr A':=\{\mathfrak a_1',\mathfrak a_2',\dots \}$ be the subset of $\mathscr
A$ consisting of the $\mathfrak
p_1,\dots,\mathfrak p_k$-ideals only. Then the $\mathfrak b'$ from above
are precisely those of the form $\mathfrak a_i'\mathfrak n'$. It
follows from the inclusion-exclusion principle and Proposition~\ref{convcond} in conjunction with
the
convergence of $\sum_{\mathfrak a'\in\mathscr A'}\frac{1}{N(\mathfrak
  a')}$ that
\begin{eqnarray*}
\sum_{\mathfrak b'}\frac{1}{N(\mathfrak b')}&=&\sum_{\mathfrak n'}\frac{1}{N(\mathfrak n')}\sum_{\varnothing\neq \mathcal J\subset
    \mathscr A'}(-1)^{\mid \mathcal J\mid+1} \frac{1}{N\big(\bigcap
                                                \mathcal J\big)}\\
&=&\Pi_k\,\,\dens (\mathcal M_{\mathscr A'})\,.
\end{eqnarray*}
One obtains that $B_k=\dens (\mathcal M_{\mathscr A'})$ which shows
that the 
$B_k$ increase with $k$. Since the $B_k$ are bounded above by $1$, this
proves that the $B_k$ indeed converge, say $\lim_{k\to\infty}B_k=:B$.  

Next, we shall show that $B=A$. Clearly, if $k$ is sufficiently large
in relation to $r$, then $\{\mathfrak a_1,\dots,\mathfrak a_r\}\subset
\mathscr A'$. Hence, one has
\[
B\,\ge\, B_k\,=\,\dens (\mathcal M_{\mathscr A'})\,\ge \,\dens(\mathcal M_{\{\mathfrak a_1,\dots ,\mathfrak
  a_r\}})
\]
and therefore $B\ge A$. For the reverse inequality $A\ge B$, let $k$
be fixed. The
convergence of $\sum_{\mathfrak a'\in\mathscr A'}\frac{1}{N(\mathfrak
  a')}$ implies that the density of $\mathcal M_{\mathscr A'}$ exists
and satisfies (see the proof of Propsosition~\ref{convcond}) 
\[
\dens (\mathcal M_{\mathscr A'})\,\le\, \dens(\mathcal M_{\{\mathfrak a_1',\dots ,\mathfrak
  a_r'\}})+\sum_{i=r+1}^{\infty}\frac{1}{N(\mathfrak a_i')}\,.
\]
Now choose $s$ large enough such that $\{\mathfrak a_1',\dots ,\mathfrak
  a_r'\}\subset \{\mathfrak a_1,\dots ,\mathfrak
  a_s\}$. It follows that
\[
\dens(\mathcal M_{\{\mathfrak a_1',\dots ,\mathfrak
  a_r'\}})\,\le \,\dens(\mathcal M_{\{\mathfrak a_1,\dots ,\mathfrak
  a_s\}})\,\le \, A
\]
and further, by letting $r\to\infty$, $\dens (\mathcal M_{\mathscr
  A'})\le A$, i.e.\ $B_k\le A$. It follows that $B\le
A$. Altogether, this proves
the claim $B=A$. We are now in a position to proof the main result of
this short note.
 
\begin{theorem}
The logarithmic density of $\mathcal M_{\mathscr A}$
exists and is equal to $A$. The number $A$ also equals the lower
density of $\mathcal M_{\mathscr A}$.
\end{theorem}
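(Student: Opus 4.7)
The density inequality together with Remark~\ref{condition} gives the chain $A \le d(\mathcal{M}_\mathscr{A}) \le \delta(\mathcal{M}_\mathscr{A}) \le \Delta(\mathcal{M}_\mathscr{A})$, so the whole theorem reduces to establishing the single upper bound $\Delta(\mathcal{M}_\mathscr{A}) \le A$; once this is in hand, all four quantities are forced to equal $A$, and both assertions follow simultaneously.

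To prove this upper bound I would fix $k \in \NN$ and keep the preceding smooth/rough setup: each non-zero integral ideal decomposes uniquely as $\mathfrak{b} = \mathfrak{b}' \mathfrak{b}''$, where $\mathfrak{b}'$ is a $\mathfrak{p}_1, \ldots, \mathfrak{p}_k$-ideal and $\mathfrak{b}''$ is composed only of primes $\mathfrak{p}_\ell$ with $\ell > k$, while $\mathscr{A}' \subset \mathscr{A}$ collects the $\mathfrak{p}_1, \ldots, \mathfrak{p}_k$-ideals in $\mathscr{A}$. Splitting the numerator of the logarithmic density accordingly,
\[
\sum_{\substack{\mathfrak{b} \in \mathcal{M}_\mathscr{A} \\ N(\mathfrak{b}) \le x}} \frac{1}{N(\mathfrak{b})} \;=\; S_1(x) \,+\, S_2(x),
\]
where $S_1$ collects those $\mathfrak{b}$ whose smooth part $\mathfrak{b}'$ is divisible by some element of $\mathscr{A}'$---equivalently, $\mathfrak{b} \in \mathcal{M}_{\mathscr{A}'}$---and $S_2$ is the complementary contribution. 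Since $\sum_{\mathfrak{a}' \in \mathscr{A}'} 1/N(\mathfrak{a}') \le \Pi_k < \infty$, Proposition~\ref{convcond} applies to $\mathscr{A}'$ and yields $\dens(\mathcal{M}_{\mathscr{A}'}) = B_k$; the density inequality then upgrades this to $S_1(x) = B_k\, c \log x + o(\log x)$.

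The heart of the argument is the tail estimate for $S_2$. Any $\mathfrak{b}$ contributing to $S_2$ must be divisible by some $\mathfrak{a} \in \mathscr{A} \setminus \mathscr{A}'$ whose rough part $\mathfrak{a}''$ is non-trivial and divides $\mathfrak{b}''$. Combining this constraint with the rough-ideal analogue of Corollary~\ref{asympden}, namely
\[
\sum_{\substack{\mathfrak{b}'' \text{ rough} \\ N(\mathfrak{b}'') \le y}} \frac{1}{N(\mathfrak{b}'')} \;=\; \frac{c}{\Pi_k}\, \log y \,+\, O_k(1),
\]
(obtained via M\"obius inversion over $\mathfrak{p}_1, \ldots, \mathfrak{p}_k$ from Proposition~\ref{idealcount}, and backed by Theorem~\ref{Mertens} whenever rough primes are allowed to grow with $y$), the goal is to show $\limsup_{x \to \infty} S_2(x)/(c\log x) \to 0$ as $k \to \infty$. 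Together with the asymptotic for $S_1$, this yields $\Delta(\mathcal{M}_\mathscr{A}) \le B_k + o_k(1)$ for every $k$; letting $k \to \infty$ and invoking $B_k \to A$ then completes the proof.

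The main obstacle is precisely this $S_2$ bound: because $\sum_{\mathfrak{a} \in \mathscr{A}} 1/N(\mathfrak{a})$ need not converge, one cannot dominate $S_2$ by a crude tail sum $\sum_{\mathfrak{a} \notin \mathscr{A}'} N(\mathfrak{a})^{-1} \cdot c \log x$. Instead, one must exploit the multiplicative structure of the smooth/rough decomposition together with the full strength of Rosen's theorem, in the spirit of the original Davenport-Erd\"os argument, to extract a bound that genuinely vanishes as $k \to \infty$.
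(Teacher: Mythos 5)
Your reduction of the whole theorem to the single inequality $\Delta(\mathcal M_{\mathscr A})\le A$, your smooth/rough splitting of the numerator into $S_1+S_2$, and your treatment of $S_1$ via Proposition~\ref{convcond} all match the paper's argument. But the proposal stops exactly where the proof begins: you declare the estimate $\limsup_x S_2(x)/(c\log x)\to 0$ as $k\to\infty$ to be ``the main obstacle'' and describe only in general terms that one should ``exploit the multiplicative structure together with the full strength of Rosen's theorem.'' That decisive step is never carried out, so what you have is a correct strategy outline, not a proof. (The auxiliary asymptotic you propose for rough ideals, with error $O_k(1)$, is also not what is needed, since the relevant set of primes grows with $x$.)

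The missing argument is a short exact computation. Let $h=h(x)$ be the number of prime ideals of norm $\le x$, so every ideal counted in $S_2(x)$ is a $\mathfrak p_1,\dots,\mathfrak p_h$-ideal lying in $\mathcal M_{\mathscr A}\setminus\mathcal M_{\mathscr A'}$; call these $\mathfrak b^\ast$ and drop the restriction $N(\mathfrak b^\ast)\le x$, so that $S_2(x)\le\sum_{\mathfrak b^\ast}1/N(\mathfrak b^\ast)$. The sum over \emph{all} $\mathfrak p_1,\dots,\mathfrak p_h$-ideals in $\mathcal M_{\mathscr A}$ equals $\Pi_h B_h$ by definition of $B_h$, while the $\mathfrak p_1,\dots,\mathfrak p_h$-ideals in $\mathcal M_{\mathscr A'}$ are precisely the products $\mathfrak b'\mathfrak c$ with $\mathfrak b'$ a $\mathfrak p_1,\dots,\mathfrak p_k$-ideal in $\mathcal M_{\mathscr A'}$ and $\mathfrak c$ a $\mathfrak p_{k+1},\dots,\mathfrak p_h$-ideal; their reciprocal-norm sum factors as $\Pi_k B_k\cdot\Pi_h\Pi_k^{-1}$. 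Subtracting gives the identity
\[
\sum_{\mathfrak b^\ast}\frac{1}{N(\mathfrak b^\ast)}\,=\,\Pi_h\bigl(B_h-B_k\bigr)\,\le\,\Pi_h\bigl(B-B_k\bigr)\,,
\]
using that the $B_k$ increase to $B$. Now Theorem~\ref{Mertens} gives $\Pi_h=C\log x+O(1)$, whence $\limsup_x S_2(x)/(c\log x)\le\frac{C}{c}(B-B_k)$, which tends to $0$ as $k\to\infty$; combined with your $S_1$ asymptotic and the identity $B=A$ established before the theorem, this yields $\Delta(\mathcal M_{\mathscr A})\le A$. Without this factorisation-and-subtraction step (or an equivalent substitute), the proposal does not prove the theorem.
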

\begin{proof}
We have to show, for $S=\mathcal M_{\mathscr A}$, the equality $d(S)=\delta(S)=\Delta(S)=A$, i.e.\
$\Delta(S)\le A$ or, equivalently, $\Delta(S)\le B$ since we have
already seen
above that $A=B$. Let $k\in\NN$ be fixed. Divide the $\mathfrak b'$
from above of norm $\le x$ into two
classes, placing in the first class those from $\mathcal M_{\mathscr
  A'}$ and in the second class the remaining ones. The $\mathfrak b'$
in the first class have density $B_k$ (see above), hence the sum
$\beta_1(x)$ corresponding to the $\mathfrak b'$ in the first class
satisfies (the density inequality is an equality in this case)
\[
\lim_{x\to\infty}\frac{\beta_1(x)}{c\log x}=B_k\,.
\]
For the sum $\beta_2(x)$ corresponding to the $\mathfrak b'$ in the
second class, let $\{\mathfrak p_1,\dots,\mathfrak p_h\}$ be the set
of all prime ideals with norm up to $x$. The $\mathfrak b'$ in the
second class are $\mathfrak
p_1,\dots,\mathfrak p_h$-ideals, but are not in $\mathcal M_{\mathscr
  A'}$. Denoting by $\mathfrak b^*$ the $\mathfrak b'$ of this kind
(wether of norm $\le x$ or not), one has
\[
\beta_2(x)\le \sum_{\mathfrak b^*}\frac{1}{N(\mathfrak b^*)}\,.
\] 
The $\mathfrak b^*$ are obtained by taking all $\mathfrak
p_1,\dots,\mathfrak p_h$-ideals $\mathfrak b''$, and removing from
them all $\mathfrak b'\mathfrak c$, where $\mathfrak b'$ is a $\mathfrak
p_1,\dots,\mathfrak p_k$-ideal and $\mathfrak c$ is any $\mathfrak
p_{k+1},\dots,\mathfrak p_h$-ideal. Hence
\[
\sum_{\mathfrak b^*}\frac{1}{N(\mathfrak b^*)}\,=\,\sum_{\mathfrak
  b''}\frac{1}{N(\mathfrak b'')}- \sum_{\mathfrak
  b'}\frac{1}{N(\mathfrak b' )}\sum_{\mathfrak c}\frac{1}{N(\mathfrak c)}\,=\,\Pi_hB_h-\Pi_kB_k\sum_{\mathfrak c}\frac{1}{N(\mathfrak c)}\,.
\]
Since
\[
\sum_{\mathfrak c}\frac{1}{N(\mathfrak c)}\,=\,\prod_{i=k+1}^{h}\Big(1-\frac{1}{N(\mathfrak
  p_i)}\Big)^{-1}\,=\,\Pi_h\Pi_k^{-1}\,,
\]
this shows that
\[
\sum_{\mathfrak b^*}\frac{1}{N(\mathfrak b^*)}\,=\,\Pi_h(B_h-B_k)\,.
\]
Finally, it follows from the Mertens type Theorem~\ref{Mertens} by Rosen that
\[
\beta_2(x)\,\le \,\sum_{\mathfrak b^*}\frac{1}{N(\mathfrak
  b^*)}\,=\,\Pi_h(B_h-B_k)\,\le \, C\log x (B_h-B_k)
\] 
and thus, with $\beta(x):=\beta_1(x)+\beta_2(x)$,
\[
\limsup_{x\to\infty}\frac{\beta(x)}{c\log x}\,\le \, B_k+\frac{C}{c}(B-B_k)\,,
\]
since $x\to\infty$ implies $h\to\infty$ which in turn implies $B_h\to
B$. Letting $k\to\infty$ and thus $B_k\to B$, one obtains that
$\Delta(\mathcal M_{\mathscr A})\le B$.
\end{proof}

\begin{coro}
The logarithmic density of $\mathcal V_{\mathscr A}$
exists and is equal to $1-A$. This number also equals the upper
density of $\mathcal V_{\mathscr A}$.
\end{coro}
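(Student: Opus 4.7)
The plan is to reduce the corollary directly to the preceding theorem by exploiting that $\mathcal V_{\mathscr A}$ is, by construction, the complement of $\mathcal M_{\mathscr A}$ within the set of all non-zero integral ideals of $\mathcal O_K$. This disjoint decomposition gives, for every $x>0$, the pointwise identity
\[
V_{\mathscr A}(x) \;=\; H(x) - M_{\mathscr A}(x)
\]
for the counting functions, and likewise
\[
\sum_{\substack{\mathfrak b\in \mathcal V_{\mathscr A}\\ N(\mathfrak b)\le x}}\frac{1}{N(\mathfrak b)} \;=\; \sum_{\substack{0\neq \mathfrak b\subset \mathcal O_K\\ N(\mathfrak b)\le x}}\frac{1}{N(\mathfrak b)} \;-\; \sum_{\substack{\mathfrak b\in \mathcal M_{\mathscr A}\\ N(\mathfrak b)\le x}}\frac{1}{N(\mathfrak b)}
\]
for the weighted sums entering the definition of logarithmic density.

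For the logarithmic density of $\mathcal V_{\mathscr A}$, I would divide the second identity by the denominator prescribed in the definition, which by Corollary~\ref{asympden} is asymptotic to $c\log x$. The main theorem asserts that the quotient attached to $\mathcal M_{\mathscr A}$ converges to $A$, so by the elementary arithmetic of limits the quotient attached to $\mathcal V_{\mathscr A}$ converges to $1-A$. This simultaneously yields the existence of $\dens_{\log}(\mathcal V_{\mathscr A})$ and its value $1-A$.

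For the statement on the upper (asymptotic) density, I would divide the first identity by $H(x)$ to obtain $V_{\mathscr A}(x)/H(x) = 1 - M_{\mathscr A}(x)/H(x)$ and take the $\limsup$. Using the standard identity $\limsup(1-a_n) = 1 - \liminf(a_n)$ together with $d(\mathcal M_{\mathscr A}) = A$ from the theorem, one obtains $D(\mathcal V_{\mathscr A}) = 1 - d(\mathcal M_{\mathscr A}) = 1 - A$. There is no substantive obstacle here; the only care required is in correctly swapping $\liminf$ and $\limsup$ under complementation, and all the genuine work has already been absorbed into the theorem.
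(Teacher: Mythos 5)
Your proposal is correct and follows the same route as the paper, which simply invokes the general complementation identities $d(S)=1-D(S^c)$ and $\delta(S)=1-\Delta(S^c)$ together with the main theorem; you have merely unpacked these identities via the pointwise decomposition of the counting functions. No issues.
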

\begin{proof}
In general, one has $d(S)=1-D(S^c)$ and $\delta(S)=1-\Delta(S^c)$.
\end{proof}

\begin{remark}
  It is natural to ask for an extension of the above results to the
  case of $m$-tuples $(\mathfrak b_1,\dots,\mathfrak b_m)$ of non-zero
  integral ideals, where one studies the set of those tuples that
  consist of simultaneous multiples of ideals from $\mathscr A$
  respectively its complement consisting of the relatively
  $\mathscr A$-free tuples. This is work in progress.
\end{remark}

\begin{remark}
  There is a non-canonical possibility of defining upper and lower
  (asymptotic) densities of sets $S$
  of non-zero integral ideals $\mathfrak b\subset\mathcal O_K$ by
  passing from $S$ to the subset 
\[
\tilde S\,:=\,\{a\in\mathcal O_K\mid
(a)\in S\}
\]
of $\mathcal O_K$ and considering the image
$\alpha(\tilde S)\subset\ZZ^d$ under any isomorphism
$\alpha:\!\,\mathcal O_K\rightarrow \ZZ^d$ of Abelian groups (recall
that $d=[K:\QQ]$). The set $\alpha(\tilde S)$ then has natural upper
and lower densities defined by counting points e.g.\ in centred balls
(or cubes) of radius $R$ in $\RR^d$ divided by the volume and then
considering the $\limsup$ resp.\ $\liminf$ as $R\to\infty$. Note that
this also extends componentwise to the case of $m$-tuples mentioned in
the last remark. In general, it is not clear if the outcome is
independent of the embedding $\alpha$ or coincides with the
corresponding densities introduced above. However, for the set of coprime
$m$-tuples $(\mathfrak b_1,\dots,\mathfrak b_m)$ of non-zero integral
ideals (i.e.\ $\mathfrak b_1+\ldots+\mathfrak b_m=\mathcal O_K$)
resp.\ the set of $m$-tuples $(a_1,\dots,a_m)\in \mathcal O_K^m$ with
$(a_1)+\ldots+(a_m)=\mathcal O_K$, even the (suitably defined)
densities exist and all answers are affirmative (with both densities
equal to $1/\zeta_K(m)$) as follows from~\cite{FM,Sittinger}. Another
coincidence of the two ways of computing densities shows up (with both
densities equal to $1/\zeta_K(l)$) in the case of $l$-free non-zero
integral ideals (non-divisibility by any nontrivial $l$th power)
resp.\ integers in $\mathcal O_K$~\cite{CV,Sittinger}. Proving such a
coincidence in our setting above for the lower density of
$\mathcal M_{\mathscr A}$ remains open, even for the case $m=1$.
\end{remark}

\section*{Acknowledgements}
It is a pleasure to thank Joanna Ku\l
aga-Przymus and Jeanine Van Order for helpful
discussions. This work was initiated during a research in pairs stay
at CIRM (Luminy) in 2016, within the Jean Morlet Chair. It
was supported by the German Research Council (DFG), within the
CRC 701.

\end{document}